\documentclass{amsart}
\usepackage[utf8]{inputenc}
\usepackage{setspace}
\usepackage[margin=1.25in]{geometry}
\usepackage{graphicx}
\graphicspath{ {./figures/} }
\usepackage{subcaption}
\usepackage{amsmath}
\usepackage{amssymb}
\usepackage{mathtools}
\usepackage{lineno}
\usepackage{amsfonts}
\usepackage{xfrac} 
\usepackage{graphicx} 
\usepackage{amsthm}
\usepackage[all]{xy}

\usepackage{float}
\usepackage{braket}

\usepackage{kbordermatrix}

\usepackage{multirow}
\usepackage{diagbox}
\usepackage{slashbox}

\usepackage{tikz-cd}
\usetikzlibrary{cd}

\usepackage[pagebackref=true]{hyperref}

\newtheorem{theorem}{Theorem}

\newtheorem{definition}[theorem]{Definition}
\newtheorem{lemma}[theorem]{Lemma}
\newtheorem{corollary}[theorem]{Corollary}

\newtheorem*{theorem*}{Theorem}

\title{Topology of the orbit space of a maximal torus action on  $\operatorname{Gr}(2,\mathbb{C}^{5})$}

\author{Shahryar Ghaed Sharaf}
\subjclass[2020]{57S25,55P15,14L24,14J26}

\begin{document}
	
	\maketitle

	\begin{abstract}
In this work, we take a topological approach and introduce a homotopy model of the orbit space of a maximal torus action on $\operatorname{Gr}(2, \mathbb{C}^{5})$. Using this homotopy model, we compute the integral homology groups and study the homotopy type of the orbit space. More precisely, we endow the homotopy model with a CW complex that contains one top cell and determine the homotopy class of the attaching map of the top cell of the orbit space.
	\end{abstract}

\tableofcontents

\section{Introduction}

Consider the Grassmannian $\operatorname{Gr}(2, \mathbb{C}^{5})$ parametrising planes in $\mathbb{C}^{5}$. The action of the algebraic torus $(\mathbb{C}^{\ast})^{5} \curvearrowright \mathbb{C}^{5}$ induces an action of the subtorus $T_{\mathbb{C}} = \{ (t_{1}, \dots, t_{5}) \in (\mathbb{C}^{\ast})^{5} \mid t_{1} \dots t_{5} = 1 \}$ on $\operatorname{Gr}(2, \mathbb{C}^{5})$. In this paper, we study the homotopy type of the orbit space $\sfrac{\operatorname{Gr}(2, \mathbb{C}^{5})}{T}$, where $T = \{ (t_{1}, \dots, t_{5}) \in (S^{1})^{5} \mid t_{1} \dots t_{5} = 1 \}$ is the associated compact torus. The homology groups of the orbit space are well-known and obtained by different methods. Süss computed the homology of the orbit space in \cite{suss2020toric} using a filtration of subspaces $\sfrac{\operatorname{Gr}(2, \mathbb{C}^{5})}{T}$. Buchstaber and Terzić determined the homology groups of the orbit space using parameter spaces in \cite{bukhshtaber2019toric}.

In this work, we take a different approach to study the topology of the orbit space $\sfrac{\operatorname{Gr}(2, \mathbb{C}^{5})}{T}$ and compute its homology groups. We introduce a homotopy model of the orbit space which is relatively easy to study. For our purposes, considering the contraction maps as algebraic morphisms is too restrictive. More precisely, we start by studying the topological properties of the five conic bundles from the degree 5 Del Pezzo surface to $\mathbb{CP}^{1}$ considered as maps between topological manifolds. Then we use these conic bundles to construct the homotopy model. In this work, we use the algebraic terminology and denote 4-dimensional topological manifolds by surfaces and 2-dimensional topological submanifolds by curves. Our homotopy model is constructed as follows. The underlying moment polytope of the torus action is the hypersimplex $\Delta_{2,5}$, which is homeomorphic to a 4-dimensional disc, $D^4$. The hypersimplex has $10$ vertices, $30$ edges, $30$ $2$-faces ($20$ triangles and $10$ squares), and $10$ 3-faces ($5$ tetrahedra and $5$ octahedra). We consider the interior of the hypersimplex $\Delta_{2,5}$ as a 4-dimensional cell. The above decomposition of the topological boundary of the underlying polytope yields a regular CW complex on the disc $D^4$. Note that a CW complex is called regular if its characteristic maps can be chosen to be embeddings. Let $Y=dP_{5}$ be the degree 5 del Pezzo surface. We have the homeomorphism $Y \cong \mathbb{CP}^{2} \#  \overline{\mathbb{CP}^{2}}\#  \overline{\mathbb{CP}^{2}}\#  \overline{\mathbb{CP}^{2}}\#  \overline{\mathbb{CP}^{2}}$. First, to each point in the interior of the disc $D^4$, we attach the topological space $Y$. It is possible to choose five sets of pairwise disjoint (-1)-curves in $Y$. Contracting each set of (-1)-curves yields an algebraic morphism $Y \longrightarrow \mathbb{CP}^{1}$. We first study the topology of these algebraic morphisms. Using the morphisms $Y \longrightarrow  \mathbb{CP}^{1}$, we attach copies of $\mathbb{CP}^{1}$ to each point in the interior of octahedra in the topological boundary of the hypersimplex $\Delta_{2,5}$. In the next step, we show that the resulting space, denoted by $X$, is homotopy equivalent to the orbit space $\sfrac{\operatorname{Gr}(2, \mathbb{C}^{5})}{T}$. We can endow the topological space $X$ with a CW complex obtained from the above regular CW complex of the disc $D^{4}$ and minimal CW complexes on $\mathbb{CP}^{2} \# \overline{\mathbb{CP}^{2}} \#  \overline{\mathbb{CP}^{2}}\#  \overline{\mathbb{CP}^{2}}\#  \overline{\mathbb{CP}^{2}}$ and $\mathbb{CP}^{1}$. Computing the homology groups of the associated chain complex the associated chain complex of the obtained CW complex on the topological space $X$ is straightforward. In the final section, we study the homotopy properties of the orbit space $\sfrac{\operatorname{Gr}(2, \mathbb{C}^{5})}{T}$, using the topological space $X$. Theorems~\ref{Theo1} and~\ref{Theo2} are the main results of this paper and can be summarized as follows:

 \begin{theorem*}
		The orbit space satisfies $\sfrac{\operatorname{Gr}(2, \mathbb{C}^{5})}{T} \simeq D^{8} \bigcup_{f} \Sigma^{4} \mathbb{RP}^{2}$, where the homotopy class $[f] \in \pi_{7}(\Sigma^{4} \mathbb{RP}^{2} )$ is a generator. Moreover, we have $D^{8} \bigcup_{f} \Sigma^{4} \mathbb{RP}^{2} \not \simeq \Sigma^{4}(D^{4} \bigcup_{g} \mathbb{RP}^{2}) $ for all $[g] \in \pi_{3}(\mathbb{RP}^{2})$.
\end{theorem*}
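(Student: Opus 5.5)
Following the strategy outlined in the introduction, I first replace the orbit space by the homotopy model $X$. Over the interior of $\Delta_{2,5}\cong D^4$ I place the del Pezzo surface $Y$. Over the interiors of the five octahedral facets I place copies of $\mathbb{CP}^1$, glued via the five conic bundles $Y\to\mathbb{CP}^1$. Lower-dimensional faces are collapsed, since there the orbits are points.

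\textbf{Cell structure and homology.} The first task is to verify that $X\simeq \sfrac{\operatorname{Gr}(2,\mathbb{C}^5)}{T}$. I would do this by comparing the stratification by orbit types with the face stratification of the hypersimplex, and checking that each gluing is homotopic to the one induced by the moment map. Next I give $X$ a CW structure. I take the regular CW structure on $D^4$ and a minimal cell structure on $Y$ (one $0$-cell, five $2$-cells, one $4$-cell). Taking products of cells over the open $4$-cell and over the octahedra, and collapsing the rest, leaves a single top cell of dimension $8$. From the cellular chain complex I expect $\tilde H_*(X)$ to be $\mathbb{Z}$ in degree $8$, $\mathbb{Z}/2$ in degree $5$, and $0$ otherwise. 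This matches the results of Süss and of Buchstaber--Terzić.

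\textbf{The $7$-skeleton.} Using the homology computation and simple connectivity (the space is $4$-connected by Hurewicz once $\tilde H_{\le 4}=0$ and $\pi_1=0$), I show that the $7$-skeleton $X^{(7)}$ is homotopy equivalent to $\Sigma^4\mathbb{RP}^2=S^5\cup_2 e^6$. The argument is that a simply connected complex of dimension at most $7$ whose only homology is $\mathbb{Z}/2$ in degree $5$ is a Moore space $M(\mathbb{Z}/2,5)$. It follows that $X\simeq D^8\cup_f\Sigma^4\mathbb{RP}^2$ for some $[f]\in\pi_7(\Sigma^4\mathbb{RP}^2)$. In the stable range this group is $\pi_2^s(M(\mathbb{Z}/2))\cong\mathbb{Z}/4$, with generator mapping to $\eta$ on the top cell $e^6$, and with $2\cdot$generator equal to $i\circ\eta^2$ on the bottom cell.

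\textbf{Identifying $[f]$ (main obstacle).} The delicate step is to show that $[f]$ is a generator. I would detect it cohomologically, using $Sq^2: H^6(X;\mathbb{Z}/2)\to H^8(X;\mathbb{Z}/2)$. We have $Sq^2\ne0$ exactly when the composite to the top cell $X^{(7)}\to S^6$ gives $\eta$, which is exactly when $[f]$ is an odd multiple of the generator. To compute $Sq^2$ I would use the Wu formula on the smooth manifold $\operatorname{Gr}(2,\mathbb{C}^5)$. Alternatively, I would work locally over the top cell, using $Y$ and the known $w_2$ of the conic bundles: the product $\mathbb{CP}^2\#4\overline{\mathbb{CP}^2}$ over a $D^4$ has $Sq^2$ given by $w_2$, nonzero on the relevant class, and the collapse transfers this to $X$. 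The fallback is to compute $f$ directly as a Hopf-type map $S^7\to S^5\cup_2e^6$ from the explicit gluing. The $\mathbb{CP}^1$-bundle over the octahedral boundary $S^3$ is a Hopf fibration, which yields $\eta$.

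\textbf{Non-equivalence.} Suppose $X\simeq\Sigma^4(D^4\cup_g\mathbb{RP}^2)$. Every class in $\pi_3(\mathbb{RP}^2)\cong\pi_3(S^2)=\mathbb{Z}$ factors through the double cover $S^2\to\mathbb{RP}^2$, so after suspension the attaching map factors through $S^6\to\Sigma^4\mathbb{RP}^2$, the suspended covering composed with $\Sigma^4 g$. On homology, the covering map $S^2\to\mathbb{RP}^2$ induces zero on $H_2$, and $\Sigma^4$ of it followed by the pinch to $S^6$ has degree $0$. Hence the suspended attaching map has trivial composite to the top cell, so $Sq^2=0$ on $H^6$. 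This contradicts the previous step.
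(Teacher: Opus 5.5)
\textbf{Main gap: the homotopy model.} Your plan does not establish the first clause, $X\simeq \operatorname{Gr}(2,\mathbb{C}^5)/T$. The fibre of the projection $p\colon \operatorname{Gr}(2,\mathbb{C}^5)/T\to\Delta_{2,5}$ over an interior point is not always $Y$. The interior of $\Delta_{2,5}$ is cut by walls into chambers, and the reduced space is $Y=dP_5$ only over the central chamber. Over every other chamber it is a contraction of $Y$: either $\mathbb{CP}^2\#k\overline{\mathbb{CP}^2}$ with $0\le k\le 3$, or $\mathbb{CP}^1\times\mathbb{CP}^1$. Across each wall the models are related by contraction maps.

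These walls lie in the interior and are not faces of $\Delta_{2,5}$. So comparing orbit types with the face stratification does not see them, and there is no gluing to ``check'' until the non-central chambers have been handled. The paper's mechanism is a homotopy replacement:
\begin{itemize}
\item Lemma~\ref{CentCham} uses mapping cylinders of the contractions $Y\to Y_D$ to show that a neighbouring chamber can be replaced by $Y\times D^4$, glued along the contraction maps, without changing the homotopy type.
\item Theorem~\ref{Theo1} applies this inductively. It uses the commutative squares through $\mathbb{CP}^2\#\overline{\mathbb{CP}^2}$ and $\mathbb{CP}^1\times\mathbb{CP}^1$ to show that every chamber touching $O_i$ ends up attached to $\mathbb{CP}^1_i$ by the single conic bundle $f_i$.
\end{itemize}
Your proposal contains none of this.

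\textbf{Secondary gap: the homology.} Writing ``I expect'' is not a computation of $H_*(X)$; as it stands you are citing Süss and Buchstaber--Terzić rather than proving anything from $X$. The actual input is that the $(j,l)$ entry of $\partial_6$ is $(f_j)_*[E_l]=[E_l]\cdot[F_j]$. Here the fibre classes are $[F_j]=[E_5]-[E_j]$ for $j\le 4$ and $[F_5]=2[E_5]-\sum_i[E_i]$. This gives the matrix of Equation~\ref{bound1} with $|\det\partial_6|=2$, hence $H_5=\mathbb{Z}/2$, $H_6=0$, and the Moore-space skeleton.

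\textbf{The rest is sound.} Everything after the model is the paper's argument in cohomological form. The paper detects $[f]$ by the composite $S^7\to X^6\to(\Sigma^4Y)^6\to S^6_i$, which is $\Sigma^4(\pm\eta)$ because the top cell of $Y$ is attached by $\sum_l\pm\eta_l$. Your route (b) uses the same collapse $c\colon X\to X/{\sim}\simeq\Sigma^4Y$, read through $Sq^2$:
\begin{itemize}
\item For $x\in H^2(Y;\mathbb{F}_2)$ dual to $E_i$, $Sq^2x=x^2\neq 0$, because the intersection form is odd.
\item $Sq^2$ commutes with suspension.
\item $c$ carries the unique $8$-cell homeomorphically onto the unique $8$-cell, so $c^*$ is an isomorphism on $H^8(-;\mathbb{F}_2)$, and therefore $Sq^2\neq 0$ on $H^6(X;\mathbb{F}_2)$.
\end{itemize}
Your criterion is correct: $[f]$ is a generator $\iff$ the composite to the top cell is $\eta$ $\iff$ $Sq^2\neq 0$.

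This criterion gives you a cleaner non-equivalence. $Sq^2$ is a homotopy invariant, and it vanishes on $\Sigma^4(D^4\cup_g\mathbb{RP}^2)$. The paper only shows $[\Sigma^4g]\in\{0,2\}\subset\mathbb{Z}/4$ and leaves implicit why a homotopy equivalence must send a generator to a generator.

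Commit to (b), though; the other two routes do not work.
\begin{itemize}
\item Route (a) fails as stated. We have $x_6=Sq^1x_5$ and $H^5(\operatorname{Gr}(2,\mathbb{C}^5);\mathbb{F}_2)=0$, so the quotient map gives $\pi^*x_6=0$. The Grassmannian therefore sees nothing of $Sq^2x_6$.
\item Route (c) misdescribes the model. The $\mathbb{CP}^1_i$ lie over $\operatorname{int}(O_i)$ and are collapsed over $\partial O_i$, so there is no $\mathbb{CP}^1$-bundle over an $S^3$ to produce $\eta$.
\end{itemize}
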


The novelty of this work is twofold. First, it gives a substantially simpler computation of the homology groups of the orbit space than the one in Buchstaber and Terzić \cite{bukhshtaber2019toric}. Second, it determines the homotopy type of the orbit space — a result not obtained by Süss \cite{suss2020toric}.

\section{Topology of the Contraction Maps}\label{TopConst}

      The main goal of this section is to study the topology of the contraction maps. We consider these maps as topological quotient maps and use the minimal CW complex of the blow-ups of $\mathbb{CP}^{2}$ to compute the induced homomorphisms on the homology groups.

      Note that, to obtain the Del Pezzo surface $Y = dP_5$ of degree 5, one can blow up $\mathbb{CP}^{2}$ at 4 points in general position. As a result, the variety $Y$, considered as a topological space, becomes homeomorphic to a connected sum of $\mathbb{CP}^{2}$, which we refer to as the original one, along with four other copies of $\mathbb{CP}^{2}$ with reversed orientation. The minimal CW complex of the connected sum $Y=\mathbb{CP}^{2} \#  \overline{\mathbb{CP}^{2}}\#  \overline{\mathbb{CP}^{2}}\#  \overline{\mathbb{CP}^{2}}\#  \overline{\mathbb{CP}^{2}}$ is given by 
      \begin{align}\label{delPezzo}
    	Y = e^{0}_{Y} \cup \big( e^{2}_{Y_{5}} \cup ( \bigcup_{i=1}^{4} e^{2}_{Y_{i}}) \big) \cup e^{4}_{Y}.
      \end{align}
      Note that in this case, each generator of the homology group $H_{i}(Y)$ corresponds to an $i$-cell in the minimal CW complex on $Y$. For our purposes, we need to study the homotopy class of the attaching map of the top cell in $Y$. The intersection form (cup product) $H^{2}(Y; \mathbb{Z}) \times H^{2}(Y; \mathbb{Z}) \xrightarrow{\cup} H^{4}(Y; \mathbb{Z})$ can be given by the matrix $\operatorname{diag}(-1,-1,-1,-1,1)$. As shown in \cite{freedman1982topology} by Freedman, two closed, simply connected topological 4‑manifolds are homeomorphic if and only if they have isomorphic intersection forms and the same Kirby–Siebenmann invariant. Note that $\mathbb{CP}^{2}$ is smooth and hence has a vanishing Kirby–Siebenmann invariant. Furthermore, note that $\pi_{3}(\bigvee_{l=1}^{5} S^{2}_{l}) \cong \bigoplus_{l=1}^{5}\mathbb{Z} [\eta_{l}] \bigoplus_{1 \leq j < k \leq 5}\mathbb{Z} [i_{j},i_{k}]\cong \mathbb{Z}^{5} \oplus \mathbb{Z}^{10}$, where $\eta_{n}:S^{3} \longrightarrow \bigvee_{l=1}^{5} S^{2}_{l}$ is the Hopf map into the $n$-th sphere factor, and $[i_{j},i_{k}]$ denotes the Whitehead product of the inclusions $i_{j}:S^{2}_{j} \hookrightarrow \bigvee_{l=1}^{5} S^{2}_{l}$ and $i_{k}:S^{2}_{k} \hookrightarrow \bigvee_{l=1}^{5} S^{2}_{l}$. For details, see Hilton \cite{hilton1955homotopy}. Let $f:S^{3} \longrightarrow \bigvee_{l=1}^{5} S^{2}_{l}$ be the attaching map of the top cell of the topological space $Y$. Since the intersection form of $Y$ is diagonal, there is no contribution from the Whitehead products to the homotopy class $[f] \in \pi_{3}(\bigvee_{l=1}^{5} S^{2}_{l})$. From the intersection form we can read off the homotopy class of the map $f$, and it follows that $[f] = \sum_{l=1}^{4}(-[\eta_{l}])+[\eta_{5}]$. Using the above notation, it is also worthwhile to mention that the intersection form of $\mathbb{CP}^{1} \times \mathbb{CP}^{1}$ is given by $	\left( \begin{matrix}
	  0 & 1 \\ 1 & 0 \end{matrix} \right)$. Note that the second skeleton $(\mathbb{CP}^{1} \times \mathbb{CP}^{1})^{2} \cong S^{2}_{1} \vee S^{2}_{2}$. Hence, the homotopy class of the attaching map $g:S^{3} \longrightarrow S^{2}_{1} \vee S^{2}_{2}$ is given by $[g]=[i_{1},i_{2}]$, where $\pi_{3}(S^{2}_{1} \vee S^{2}_{2}) \cong \mathbb{Z}[\eta_{1}]\oplus \mathbb{Z}[\eta_{2}] \oplus \mathbb{Z}[i_{1},i_{2}]$.

       Let $E_{1}, \dots , E_{4}$ be the exceptional divisors of the blow-ups, and let $E_{5}$ be the preimage of a complex line in the original $\mathbb{CP}^{2}$ that does not contain the points of the blow-up. The Picard group $\operatorname{Pic}(Y)$ is a free abelian group of rank 5 with a basis given by $[E_{1}], \dots , [E_{5}]$. The intersection product of curves defines a non-degenerate bilinear form on $\operatorname{Pic}(Y)$, given by:
       \begin{align*}
	   [E_{i}] \cdot [E_{j}] = \left\{ \begin{array}{rc}
	 	1 & i=j=5\\ -1 & i=j \leq 4 \\
		0 &  i \neq j. 
    	\end{array}\right.
      \end{align*}
       The classes of ten irreducible rational complex (-1)-curves are $[C_{i5}]=[E_{i}] \; , \; i \in \{1, \dots , 4\}$, and $[C_{ij}]=[E_{5}]-[E_{i}]-[E_{j}] \; , \;\{i,j\} \subset \{1, \dots, 4\}$. Note that $[C_{ij}]$ and $[C_{lk}]$ are disjoint if and only if $[C_{ij}] \cdot [C_{kl}] = 0$. Hence, we can deduce the following rules for the intersection of the ten irreducible curves. The curves representing the classes $[C_{i5}]$ and $[C_{j5}]$ are disjoint if and only if $i \neq j$. For $\{i,j\}, \{k,l\} \subset \{1,2,3,4\} $, it follows that the curves representing $[C_{ij}]$ and $[C_{kl}]$ are disjoint if and only if $\vert \{i,j\}\cap \{k,l\} \vert=1$. Finally, the curves representing the classes $[C_{i5}]$ and $[C_{kl}]$ intersect if $i \in \{k,l\}$, where $\{j,k\} \subset \{1,2,3,4\}$. Let $D \subset \big\{ [C_{ij}] \; \vert \; \{i,j  \} \subset \{ 1,2,3,4,5 \}  \big\}$ and $\operatorname{card}(D) \leq 4$, such that the elements of $D$ are pairwise disjoint. Contracting the complex curves in $D$ yields a birational model $Y_{D}$ obtained from $Y$. The obtained birational model, $Y_D$, is homeomorphic to either the blow-up of $\mathbb{CP}^{2}$ on up to 3 points in general positions or $\mathbb{CP}^{1} \times \mathbb{CP}^{1}$. Using the minimal CW complex on $Y$, introduced in Equation \ref{delPezzo}, the contraction maps $\phi_{ \{[C_{i5}]\}}:Y \longrightarrow Y_{ \{[C_{i5}]\} }$ are obtained by collapsing the 2-cell $e^{2}_{i}$ to a point where $i \in \{1,2,3,4\}$. Note that the first Chern homomorphism $c_{1}:\operatorname{Pic}(Y) \longrightarrow H^{2}(Y)$ is an isomorphism and by using Poincaré duality and abuse of notation, we conclude that the set $\{[E_{1}],[E_{2}],[E_{3}],[E_{4}],[E_{5}]\}$ is a basis of $H_{2}(Y)$. The above non-degenerate bilinear form on $\operatorname{Pic}(Y)$ yields the intersection form $Q_{Y}$ on $H_{2}(Y)$, namely $Q_{Y}=\operatorname{diag}(-1,-1,-1,-1,1)$. We should also mention that for a closed oriented 4-manifold the intersection form (cup product) on the second cohomology group $H^{2}$ and the intersection form on the second homology group $H_{2}$ are the same bilinear form using Poincaré duality. As a result, the induced homomorphism on the second homology group is the projection 
       $(\phi_{ \{[C_{i5}]\}})_{2}: H_{2}(Y) (\cong \bigoplus_{l=1}^{5}\mathbb{Z} \braket{[E_{l}]}) \longrightarrow H_{2}(Y_{ \{[C_{i5}]\}})(\cong \sfrac{(\bigoplus_{l=1}^{5} \mathbb{Z}\braket{[E_{l}]})}{\mathbb{Z}\braket{[E_{i}] }} )$. The description of the contraction maps $\phi_{ \{[C_{ij}]\}}:Y \longrightarrow Y_{ \{[C_{ij}]\} }$ is a bit more involved. Contracting the (-1)-curve $[C_{ij}]$ induces the homomorphism $(\phi_{ \{[C_{ij}]\}})_{2}: H_{2}(Y) (\cong \bigoplus_{l=1}^{5}\mathbb{Z} \braket{[E_{l}]}) \longrightarrow H_{2}(Y_{ \{[C_{ij}]\}})\big(\cong \sfrac{(\bigoplus_{l=1}^{5} \mathbb{Z}\braket{[E_{l}]})}{\big( \mathbb{Z}(\braket{[E_{5}]} -\braket{ [E_{i}]}-\braket{[E_{j}]} )\big) } \big)$ between the second homology groups. Hence, we can choose a basis of $H_{2}(Y_{ \{[C_{ij}]\}})$ such that 
       \begin{align*}
	   (\phi_{ \{[C_{ij}]\}})_{2}=	\left( \begin{matrix}
		1 & 0 & 0 & 0 & 1  \\
		0 & 1 & 0 & 0 & 1  \\
		0 & 0 & 1 & 0 & 0  \\
		0 & 0 & 0 & 1 & 0  \\		
	   \end{matrix} \right).
       \end{align*}
       Recall that each 2-cell in the minimal CW complex on $Y_{ \{[C_{ij}]\}}$ is represented by a generator of the second homology group $H_{2}(Y_{ \{[C_{ij}]\}})$. Consider the minimal CW complex $Y_{ \{[C_{ij}]\}} = e^{0}_{Y_{ \{[C_{ij}]\}} } \cup ( \bigcup_{i=1}^{4} e^{2}_{{Y_{ \{[C_{ij}]\}}}_{i}} ) \cup e^{4}_{Y_{ \{[C_{ij}]\}}}$. From the matrix representation of $(\phi_{ \{[C_{ij}]\}})_{2}$ it follows that the contraction map $\phi_{[C_{ij}]}$ sends each 2-cell $e^2_{Y_l}$ (for $l=1,2,3,4$) to the cell $e^2_{{Y_{\{[C_{ij}]\} }}_{l} }$, and sends $e^2_{Y_5}$ to the union $e^2_{{Y_{\{ [C_{ij}] \}}}_{1} }$ and $e^2_{{Y_{\{[C_{ij}]\} }}_{2}}$, with degree one on each of these two target cells.

       Using the above description, we can topologically describe all contraction maps $\phi_{D}$ and their corresponding induced homomorphisms on the homology groups.

       In the following, we briefly explain some topological properties of five distinct conic bundles $f_{l}:Y \longrightarrow \mathbb{CP}^{1}$ for $l \in \{1, \cdots ,5\}$. For a general introduction and the definitions of terms pencils and proper transforms the reader may consult the standard text book \cite[Chapter 1]{griffiths2014principles} by Griffiths and Harris. Let $p_{1}$, $p_{2}$, $p_{3}$, and $p_{4}$ denote the four blow-up points in general positions. Consider a pencil of lines through the point $p_{1}$. This pencil (family of lines) is parametrized by the algebraic variety $\mathbb{CP}^{1}$. For a general line $\widetilde{C}$ through $p_{1}$ (not passing through any other $p_{i}$), its proper transform $C$ on $Y$ is a smooth rational curve, homeomorphic to the topological manifold $\mathbb{CP}^{1}$, in the divisor class $[E_{5}]-[E_{1}]$. These proper transforms give a one-parameter family of disjoint curves each intersecting $E_{1}$ transversally in one point. If the line $\widetilde{C}_{12}$ passes through $p_{1}$ and $p_{2}$, its proper transform $C_{12}$ is a class in $[E_{5}]-[E_{1}]-[E_{2}]$. In other words, the class $[C_{12}]+[E_{2}]$ is equivalent to $[E_{5}]-[E_{1}]$. The same happens for the lines through $p_{1}$ and $p_{3}$, namely $\widetilde{C}_{13}$ with the proper transform $C_{13}$, and the line through $p_{1}$ and $p_{4}$, namely $\widetilde{C}_{14}$ with the proper transform $C_{14}$, respectively. We denote the first introduced pencil by $\vert E_{5}-E_{1} \vert$ and it defines a conic bundle $f_{1}:Y \longrightarrow \mathbb{CP}^{1}$ whose general fiber is a smooth rational curve and whose three special fibers are $C_{12} \cup E_{2}$, $C_{13} \cup E_{3}$, and $C_{14} \cup E_{4}$. The remaining four conic bundles $f_{2}$, $f_{3}$, $f_{4}$, and $f_{5}$ are constructed as above by considering the pencils $\vert E_{5}-E_{2} \vert$, $\vert E_{5}-E_{3} \vert$, $\vert E_{5}-E_{4} \vert$, and $\vert 2 E_{5}- \sum_{i=1}^{}4E_{i} \vert$, respectively. Using the above notation, the special fibers of the conic bundle $f_{5}$ are $C_{12} \cup C_{34}$, $C_{13} \cup C_{24}$, and $C_{14} \cup C_{23}$. For the purpose of this work, we need to study the induced homomorphisms $(f_{i})_{\ast}:H_{2}(Y) \longrightarrow H_{2}( \mathbb{CP}^{1})$. For each of the five conic bundles $f_{i}$, the kernel of $(f_{i})_{\ast}$ is a rank 4 sublattice of $H_{2}(Y)$. It is generated by the six $(-1)$-curves that appear as components of the three singular fibres. Let $\{[E_{1}],[E_{2}],[E_{3}],[E_{4}],[E_{5}]\}$ be a basis of $H_{2}(Y)$. Hence, we arrive at $\operatorname{ker}((f_{1})_{\ast})= \operatorname{span}\braket{ [E_{2}],[E_{3}],[E_{4}],[C_{12}],[C_{13}],[C_{14}]}$ and $\operatorname{ker}((f_{5})_{\ast})= \operatorname{span}\braket{[C_{12}],[C_{13}],[C_{14}],[C_{23}],[C_{24}],[C_{34}]}$. The kernels $\operatorname{ker}((f_{2})_{\ast})$, $\operatorname{ker}((f_{3})_{\ast})$, and $\operatorname{ker}((f_{4})_{\ast})$ are obtained similarly. Let $f_{\ast}= ((f_{1})_{\ast},(f_{2})_{\ast},(f_{3})_{\ast},(f_{4})_{\ast},(f_{5})_{\ast}) \in M_{5 \times 5}(\mathbb{Z})$. In the above basis, we have 
       \begin{align}\label{bound1}
        f_{\ast}=	\left( \begin{matrix}
		1 & 0 & 0 & 0 & 1  \\
		0 & 1 & 0 & 0 & 1  \\
		0 & 0 & 1 & 0 & 1  \\
		0 & 0 & 0 & 1 & 1  \\
		1 & 1 & 1 & 1 & 2  \\		
    	\end{matrix} \right).
       \end{align}

	\section{A homotopy Model of the Orbit Space}

	The hypersimplex $\Delta_{2,5}$ has 10 3-faces (facets), 30 2-faces, 30 1-faces (edges), and 10 0-faces (vertices). Consider the following regular CW complex on the hypersimplex $P:=\Delta_{2,5}$. We represent the interior of $P$ by a 4-cell. Each octahedral and tetrahedral facet are represented by $e^{3}_{O_{i}}$ and $e^{3}_{T_{i}}$, respectively. Similarly, we represent the remaining $n$-dimensional faces with $e^{n}_{P_{i}}$, where for $n=1,2$ $i \in \{1, \dots , 30\}$ and for $n=0$, $i \in \{0 , \dots , 10\}$. Let $f_{i}:Y \longrightarrow \mathbb{CP}^{1}$ for $i \in \{1, \cdots,5\}$ be the five distinct conic bundles described in the previous section. We define the topological space
	\begin{align}\label{HomModel}
	X = \sfrac{P \times Y}{\sim},
    \end{align}
	where $\sim$ is defined as follows. For $x, x^{\prime} \in \operatorname{int}(P)$ and $y,y^{\prime} \in Y$, we set $(x,y) \sim (x^{\prime},y^{\prime})$, if $(x,y) = (x^{\prime},y^{\prime})$. Only, if $x \in \operatorname{int}(O_{i})$, we require $(x,y) \sim (x,y^{\prime})$ if $f_{i}(y)=f_{i}(y^{\prime})$, in other words, we collapse $\{x\} \times Y$ to $\{x\} \times \mathbb{CP}_{i}^{1}$ using the conic bundle as the attaching map $f_{i}:Y \longrightarrow \mathbb{CP}^{1}$. Otherwise, we define $(x,y) \sim (x,\ast)$ for a fixed point $\ast \in Y$, which implies that over a point $x \in \partial(P) \setminus (\cup_{i} \operatorname{int}(\mathcal{O}_{i}))$ we collapse the topological manifold $Y$ to a point. We consider the given CW complex of $Y$ in Equation \ref{delPezzo} and the minimal CW complex on $\mathbb{CP}^{1}_{i}$. Recall that the generators of the homology groups $H_{i}(Y)$ can be considered as $i$-cells of the minimal CW complex of the topological manifold $Y$. Accordingly, we arrive at the following chain groups.
	   \begin{align} \label{CWX}
		C_{8}(X)&= \braket{ e^{4}_{Y} \times e^{4}_{P} } \mathbb{Z}, \; C_{7}(X)= 0, \; C_{6}(X) =\bigoplus_{i=1}^{5} \braket{e^{2}_{Y_{i}} \times e^{4}_{P}  } \mathbb{Z} \notag \\
		C_{5}(X) &= \bigoplus_{i=1}^{5} \braket{e^{2}_{\mathbb{CP}_{i}} \times e^{3}_{O_{i}} } \mathbb{Z}, \;
		C_{4}(X) = \braket{ e^{0}_{Y} \times e^{4}_{P} } \mathbb{Z} \\
        C_{3}(X) &= \bigoplus_{i=1}^{5} \braket{e^{0}_{\mathbb{CP}^{1}_{i}} \times e^{3}_{O_{i}}  } \mathbb{Z} \bigoplus_{i=1}^{5} \braket{e^{3}_{T_{i}} } \mathbb{Z} \notag \\
        C_{2}(X)&= \bigoplus_{i=1}^{30} \braket{e^{2}_{P_{i}}} \mathbb{Z}, \;
        C_{1}(X)= \bigoplus_{i=1}^{30} \braket{e^{1}_{P_{i}}} \mathbb{Z}, \;	 	
        C_{0}(X)= \bigoplus_{i=1}^{10} \braket{e^{0}_{P_{i}}} \mathbb{Z}.  \notag
    \end{align}
    By using the matrix representations of the induced homomorphisms $(f_{i})_{\ast}$, given in Equation \ref{bound1}, and considering that the 4-skeleton of the topological space $X$ is contractible, since it is homeomorphic to a 4-disk, we arrive at the following boundary operators
    \begin{align*}
	\partial_{i}&=0 \;\; \text{for}\;\; i \geq 5 \; \text{and} \; i \neq 6 \\
	\partial_{6}&= \left( \begin{matrix}
		1 & 0 & 0 & 0 & 1  \\
	0 & 1 & 0 & 0 & 1  \\
	0 & 0 & 1 & 0 & 1  \\
	0 & 0 & 0 & 1 & 1  \\
	1 & 1 & 1 & 1 & 2  \\		
	\end{matrix} \right) \\
    \partial_{i} &= \partial_{i}^{P} \; \;\text{for} \; \; i \leq 4,
    \end{align*}
     where $\partial_{i}^{P}$ denotes the boundary operator of the regular CW complex on $P$ and the boundary operator $\partial_{6}$ follows from the consideration that leads to Equation \ref{bound1}.

    \begin{corollary}\label{HomX}
	Let $X$ be constructed as above. Then, we have
	\begin{align*}
		H_{i}(X) = \left\{ \begin{array}{rcl}
		\mathbb{Z}_{\phantom{2}} & \text{for} & i=0,8 \\
		\mathbb{Z}_{2} & \text{for} & i=5\\ 0_{\phantom{2}} &  & \text{otherwise} \end{array} \right.
	\end{align*}
    \end{corollary}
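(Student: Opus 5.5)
We compute the homology of the cellular chain complex \eqref{CWX} with the boundary operators listed just before the corollary, one degree at a time.

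\textbf{Low degrees $i \le 4$.} Here $\partial_i = \partial_i^{P}$, and the relevant cells are exactly the cells of the regular CW complex on $P \cong D^4$. The only subtlety is the identification of the cells of $C_3(X)$ and $C_4(X)$ with the $3$- and $4$-cells of $P$:
\begin{itemize}
\item $e^{0}_{\mathbb{CP}^1_i}\times e^3_{O_i}$ corresponds to $e^3_{O_i}$;
\item $e^0_Y\times e^4_P$ corresponds to $e^4_P$.
\end{itemize}
Since the truncation of the chain complex in degrees $\le 4$ is the cellular chain complex of the contractible space $P$, we get $H_0(X)=\mathbb{Z}$ and $H_i(X)=0$ for $1\le i\le 3$.

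\textbf{Degree $4$.} We need $\partial_5=0$ together with the fact that $\partial_4^P\colon \mathbb{Z}\to C_3$ is injective (it sends the top cell to the signed sum of the ten facets). Hence $\ker\partial_4=0$ and $H_4(X)=0$.

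\textbf{Degrees $5$ through $8$.} The boundary maps are $\partial_8=\partial_7=0$ (indeed $C_7=0$) and $\partial_5=0$. Therefore:
\begin{itemize}
\item $H_8(X)=C_8=\mathbb{Z}$;
\item $H_7(X)=0$;
\item $H_6(X)=\ker\partial_6$;
\item $H_5(X)=C_5/\operatorname{im}\partial_6 = \operatorname{coker}\partial_6$.
\end{itemize}
Everything thus reduces to the Smith normal form of the $5\times5$ matrix in \eqref{bound1}. Subtracting rows $1$ through $4$ from row $5$ turns the last row into $(0,0,0,0,-2)$, so the matrix is upper triangular with diagonal $(1,1,1,1,-2)$. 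Its determinant is $-2\neq0$, so $\ker\partial_6=0$. Clearing the last column with column operations gives Smith normal form $\operatorname{diag}(1,1,1,1,2)$, hence $\operatorname{coker}\partial_6\cong\mathbb{Z}/2$. This yields $H_6(X)=0$ and $H_5(X)=\mathbb{Z}_2$.

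\textbf{Main obstacle.} The arithmetic is trivial; the real issue is justifying the boundary operators themselves. That means confirming two things:
\begin{itemize}
\item $\partial_6$ is given by $f_\ast$. Each $2$-cell $e^2_{Y_j}\times e^4_P$ has boundary over the octahedra $O_i$, mapped by $(f_i)_\ast$ onto $e^2_{\mathbb{CP}_i}\times e^3_{O_i}$, while its part over the tetrahedra and lower faces collapses into the contractible $4$-skeleton.
\item The differentials $\partial_7,\partial_8$ and $\partial_5$ vanish for dimensional/collapse reasons.
\end{itemize}
We take these, as stated before the corollary, to be given. The orientation signs of the facets do not affect the result, since sign changes of rows do not change the determinant's absolute value $2$ or the Smith normal form.
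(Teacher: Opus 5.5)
Your proposal is correct and takes essentially the same approach as the paper. The paper's proof is just the observation that $|\det(\partial_6)|=2$ in the cellular chain complex of $X$. Your degree-by-degree bookkeeping makes explicit what the paper leaves implicit: low degrees come from the contractible $P$, $\partial_4^P$ is injective, $\partial_5,\partial_7,\partial_8$ vanish, and $\partial_6$ has Smith normal form $\operatorname{diag}(1,1,1,1,2)$. Like the paper, you rely on the boundary operators stated before the corollary.
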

    \begin{proof}
	Note that $\vert \operatorname{det}(\partial_{6})\vert=2$. The claim follows as a result.
    \end{proof}

    \begin{definition}
	Let $D$, $D'$, $W$, and $Z$ be topological spaces, and let $g:W \longrightarrow Z$ be a map. Let $D'$ and $D$ be closed in $D \cup D'$. The pushout $W \times D \cup_{g} Z \times D'$ is referred to as the space obtained by attaching $Z \times D'$ to $W \times D$ using the map $g:W \longrightarrow Z$ as the attaching map. If the inclusions $D' \cap D \hookrightarrow D$ and $D' \cap D \hookrightarrow D'$ are clear from the context, we omit the spaces $D'$ and $D$ and refer to the pushout as the space obtained by attaching $Z$ to $W$. 
    \end{definition}

      We now turn our attention to the topological construction of the orbit space. The construction proceeds as follows: there is a chamber decomposition of the hypersimplex $\Delta_{2,5}$, and each chamber is associated with a birational model. The chambers are attached together using the contraction maps. Our goal is to study the homotopy type of the orbit space. As it turns out, we do not require the exact arrangement of the chambers or the associated birational models. For a detailed construction of the chamber decomposition of the orbit space, the reader may consult \cite{suss2020toric} by Süss. For our topological considerations, we will need the following observations.

       Let $D_{i}$ and $D_{j}$ be sets of pairwise disjoint $(-1)$-curves such that $D_{j} \subset D_{i}$. Then, we have the following commutative diagram of the contraction maps.
       \begin{align}\label{CommDiag}
	   \xymatrix{Y \ar[d] \ar[dr] &  \\
		Y_{D_{i}} \ar[r] & Y_{D_{j}}} 
       \end{align}	

       Let $P_C$ be the central chamber and $P_{D_{i}}$ one of its neighboring chambers. Let $p: \sfrac{\operatorname{Gr}(2,\mathbb{C}^{5})}{T} \longrightarrow P$ be the natural projection, where $P \cong \Delta_{2,5}$. Let also $Y_{D_{i}}$ be the birational model associated with $P_{D_{i}}$. For each variety $Y_{D_{j}}$ on the topological boundary of $P_{D_{i}}$, we have a commutative diagram similar to Diagram \ref{CommDiag}. We attach $(Y \times D^{4})$ to $p^{-1}(\partial (P_{C} \cup P_{D_{i}}) )$ with the attaching maps $\phi_{j}:Y \longrightarrow Y_{D_{j}}$ for each birational model $Y_{D_{j}}$ associated with a cell in the regular CW complex of $\partial (P_{C} \cup P_{D_{i}})$ induced by the chamber decomposition of $P$. In other words, the pushout $(Y \times D^{4}) \cup p^{-1}(\partial (P_{C} \cup P_{D_{i}}) )$ denotes the space obtained by attaching $(Y \times D^{4})$ to $p^{-1}(\partial (P_{C} \cup P_{D_{i}}) )$ using the contraction maps $\phi_{j}$ as the attaching maps, as $j$ ranges over all cells in the regular CW complex $\partial (P_{C} \cup P_{D_{i}})$. Furthermore, we define the topological space $\big((\sfrac{\operatorname{Gr}(2, \mathbb{C}^{5})}{T})\setminus p^{-1}(\operatorname{int}(P_{C} \cup P_{D_{i} })) \big) \cup \big((Y \times D^{4}) \cup p^{-1}(\partial (P_{C} \cup P_{D_{i}}))\big)$ to be the space obtained by attaching $(Y \times D^{4}) \cup p^{-1}(\partial (P_{C} \cup P_{D_{i}}))$ to $(\sfrac{\operatorname{Gr}(2, \mathbb{C}^{5})}{T})\setminus p^{-1}(\operatorname{int}(P_{C} \cup P_{D_{i} }))$ by using the contraction maps $\phi_{j}$ as attaching maps.
      
      \begin{definition}\label{Replacing}
	   We refer to the topological space $\big((\sfrac{\operatorname{Gr}(2, \mathbb{C}^{5})}{T})\setminus p^{-1}(\operatorname{int}(P_{C} \cup P_{D_{i} })) \big) \cup \big((Y \times D^{4}) \cup p^{-1}(\partial (P_{C} \cup P_{D_{i}}))\big)$ as the space obtained from $\sfrac{\operatorname{Gr}(2, \mathbb{C}^{5})}{T}$ by replacing the chamber $P_{D_{i}}$ with a copy of the central chamber. By abuse of notation, we call the topological subspace $(Y \times D^{4}) \cup p^{-1}(\partial (P_{C} \cup P_{D_{i}})) \subset \big((\sfrac{\operatorname{Gr}(2, \mathbb{C}^{5})}{T})\setminus p^{-1}(\operatorname{int}(P_{C} \cup P_{D_{i} })) \big) \cup \big((Y \times D^{4}) \cup p^{-1}(\partial (P_{C} \cup P_{D_{i}}))\big)$ the central chamber of the space $(\sfrac{\operatorname{Gr}(2, \mathbb{C}^{5})}{T})\setminus p^{-1}(\operatorname{int}(P_{C} \cup P_{D_{i} })) \big) \cup \big((Y \times D^{4}) \cup p^{-1}(\partial (P_{C} \cup P_{D_{i}}))\big)$.
      \end{definition}

     In the subsequent lemma, we will show $p^{-1}(P_{C} \cup P_{D_{i}}) \simeq (Y \times D^{4}) \cup p^{-1}(\partial (P_{C} \cup P_{D_{i}}) )$. Subsequently, by using an inductive application of the previous claim, we will show the main theorem of this section.

    \begin{lemma}\label{CentCham}
	Let $P_{C}$ be the central chamber and $P_{D_{i}}$ one of its neighboring chambers. Let $p: \sfrac{\operatorname{Gr}(2,\mathbb{C}^{5})}{T} \longrightarrow P$ be the natural projection. Then, we have the homotopy equivalence $p^{-1}(P_{C} \cup P_{D_{i}}) \simeq (Y \times D^{4}) \cup p^{-1}(\partial (P_{C} \cup P_{D_{i}}) )$.
     \end{lemma}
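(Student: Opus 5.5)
Both spaces will be written as pushouts over the same boundary piece $p^{-1}(\partial(P_C\cup P_{D_i}))$, and we then compare the two pieces sitting over the interior. Over $\operatorname{int}(P_C)$ the projection $p$ is a trivial fibration with fibre $Y$, by Süss's chamber description \cite{suss2020toric}. Over $\operatorname{int}(P_{D_i})$ it is trivial with fibre $Y_{D_i}$. The common wall $W=P_C\cap P_{D_i}$ is a $3$-dimensional interior wall. Crossing it, the fibres over $\operatorname{int}(P_C)$ degenerate to the fibre $Y_{D_i}$ via the contraction $\phi_{D_i}:Y\to Y_{D_i}$. Hence $p^{-1}(P_C\cup P_{D_i})$ is homeomorphic to the double mapping cylinder
\[
\bigl(Y\times P_C\bigr)\cup_{\phi_{D_i}} \bigl(Y_{D_i}\times P_{D_i}\bigr),
\]
glued along $Y\times W\to Y_{D_i}\times W$. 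The remaining boundary faces are glued to $p^{-1}(\partial(P_C\cup P_{D_i}))$ via the maps $\phi_j$. Diagram \ref{CommDiag} guarantees that these boundary gluings are compatible: the map from $Y$ factors through $Y_{D_i}$ on the faces of $P_{D_i}$.

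The key step is a deformation that pushes the chamber $P_{D_i}$ into the wall. First choose a homeomorphism of pairs $(P_C\cup P_{D_i},\partial)\cong(D^4,S^3)$, under which $W$ becomes a properly embedded $3$-disc splitting $D^4$ into two half-discs. Then construct a map $r:P_C\cup P_{D_i}\to P_C\cup P_{D_i}$ that is the identity on $\partial(P_C\cup P_{D_i})$. It should map $P_C$ homeomorphically onto the whole union and collapse $P_{D_i}$ onto $\partial P_{D_i}\setminus \operatorname{int}W$, and it should be homotopic rel boundary to the identity. Covering $r$, we build a map
\[
\Phi:(Y\times D^4)\cup p^{-1}(\partial)\longrightarrow p^{-1}(P_C\cup P_{D_i})
\]
by the following recipe. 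Over points of $\operatorname{int}P_C$ that land in $P_C$, $\Phi$ is the identity on the $Y$-factor. Over points that land in $P_{D_i}$, $\Phi$ is $\phi_{D_i}$ on the $Y$-factor. On boundary strata, $\Phi$ is the given $\phi_j$. Commutativity of Diagram \ref{CommDiag} is exactly what makes $\Phi$ well defined and continuous.

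To show $\Phi$ is a homotopy equivalence, view both sides as pushouts along the cofibration $Y\times S^3\hookrightarrow Y\times D^4$ (respectively the analogous CW inclusion on the Grassmannian side), with the same boundary space. Apply the gluing lemma for homotopy pushouts: it suffices that $\Phi$ restricts to homotopy equivalences on the pieces and commutes with the gluing maps up to homotopy rel boundary. On the Grassmannian side, the interior piece is the mapping cylinder of $\phi_{D_i}\times\mathrm{id}_W$ thickened over $D^4$. This mapping cylinder deformation retracts onto its $Y\times D^4$ end. We do not collapse to the $Y_{D_i}$ end, because $\phi_{D_i}$ is not a homotopy equivalence.

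The obstacle is making this retraction compatible with the boundary. Note that $\partial P_{D_i}\setminus\operatorname{int}W$ is itself a $3$-disc, and over it the fibres are further contractions of $Y_{D_i}$. So we must show that, relative to $p^{-1}(\partial)$, the piece $p^{-1}(P_{D_i})$ is a collar that is a homotopy pushout of $Y\times W\to Y_{D_i}\times W$ together with the outer boundary. The plan is to use the CW structures of Section~\ref{TopConst}. There, each contraction map is cellular: it collapses or combines $2$-cells and has degree one on the top cell. So one can verify on cellular chains that the inclusion of the wall part induces isomorphisms, and then invoke simple connectivity together with Whitehead's theorem. Simple connectivity holds because the boundary pieces are quotients of $Y\times$(disc), hence simply connected. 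This cellular comparison is the step I expect to be hardest to make fully rigorous.
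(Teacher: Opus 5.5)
There is a genuine gap, and it sits at the step that carries the argument.

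Your claim that the interior piece ``deformation retracts onto its $Y\times D^4$ end'' is backwards. A mapping cylinder $\operatorname{Cyl}(g)$ always deformation retracts onto its \emph{target} end, while the inclusion of the source end is a homotopy equivalence if and only if $g$ is. Concretely, $P_{D_i}$ deformation retracts onto the wall $S=P_C\cap P_{D_i}$, and $Y\times S\hookrightarrow Y\times P_C$ is a cofibration and a homotopy equivalence. So $(Y\times P_C)\cup_{\phi_{D_i}}(Y_{D_i}\times P_{D_i})\simeq Y_{D_i}$, and $b_2(Y_{D_i})<b_2(Y)=5$. The same holds for the preimage of any interior disc that crosses the wall.

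Hence, whatever you take as the Grassmannian-side interior piece, $\Phi$ restricted to it is a map from $Y\times D^4\simeq Y$ to a space $\simeq Y_{D_i}$. The hypothesis of the gluing lemma therefore fails. The fallback via cellular chains and Whitehead's theorem does not close the gap either. You do not say which map is supposed to induce which isomorphism. Computing $H_\ast(p^{-1}(P_C\cup P_{D_i}))$ over all boundary strata is essentially the content of the lemma. And ``a quotient of $Y\times$(disc) is simply connected'' is not valid without further argument.

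The missing idea is to absorb the non-equivalence $\phi_{D_i}$ into a mapping cylinder that is collapsed toward the boundary, not toward the interior. This is what the paper does:
\begin{itemize}
\item Take a collar $D^4_a\subset P_C$ of $S$ and set $D^4_b=D^4_a\cup P_{D_i}$. Let $D^3\subset\operatorname{int}(P_C)$ be the face of $D^4_a$ opposite $S$, and let $W=\partial P_{D_i}\setminus\operatorname{int}(S)$.
\item Then $p^{-1}(D^4_b)$ is the mapping cylinder of the cellwise map $f\colon Y\times D^3\to p^{-1}(W)$ given by the contractions $\phi_{D_{W_j}}$. This uses that each of these contractions factors through $Y_{D_i}$ (Diagram \ref{CommDiag}), together with $\operatorname{Cyl}(v\circ u)\simeq\operatorname{Cyl}(u)\cup_{\operatorname{id}}\operatorname{Cyl}(v)$ relative to both ends.
\item Collapse this cylinder onto its target $p^{-1}(W)$ and fix everything else. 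This is legitimate because the source end $Y\times D^3$ is attached along a cofibration, and the result is $(Y\times D^4)\cup p^{-1}(\partial(P_C\cup P_{D_i}))$.
\end{itemize}
Your map $\Phi$, which stretches $P_C$ over the union, is plausibly a homotopy inverse to this collapse. To prove that, though, you need exactly this composite-cylinder comparison over the collar $(r|_{P_C})^{-1}(P_{D_i})$, not piecewise equivalences on an interior/boundary decomposition.
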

     \begin{proof}
     We start the proof with the following topological consideration. Let $Y_{D_{i}}$ and $Y_{D_{j}}$ be the attached birational model to two neighboring chambers separated by the wall $\widetilde{W}$. Without loss of the generality, we assume that $D_{j} \subset D_{i}$. Hence, we have the contraction map $\phi: Y_{D_{i}} \longrightarrow Y_{D_{j}}$. It follows that the associated birational model with the interior of the separating wall $\widetilde{W}$ is $Y_{D_{j}}$. This observation holds for lower-dimensional faces in the following sense. Let $W_1, \dots , W_i$ be $n$-dimensional faces in the interior of $P:=\Delta_{2,5}$ with a non-empty intersection of topological boundaries, where $n \leq 3$. Let $F$ be the $(n-1)$-dimensional face shared by $W_1, \dots , W_i$. Let $Y_{D_{1}}, \dots , Y_{D_{i}}$ be the birational models associated with the interior of $W_1, \dots , W_i$, respectively. Then, the attached birational model to the interior of $F$ is $Y_{D_{p}} \in \{ Y_{D_{1}}, \dots , Y_{D_{i}}  \}$, where $\forall \: l \in \{1,\dots, i\} \backslash \{p\}$ there is a contraction map $Y_{D_{l}} \longrightarrow Y_{D_{p}}$.

     Now let $S := P_{D_{i}} \cap P_{C}$. In other words, the topological space $S \cong D^{3}$ is the wall that separates $P_{D_{i}}$ from $P_{C}$. Consider a 4-disk $D_{a}^{4} \subset P_{C}$ such that $S \subset \partial D_{a}^{4}$ and $\partial D_{a}^{4} \cap (\partial P_{C} \setminus S) = \emptyset$. Let $W:= \big(\partial (P_{D_{i}}) \setminus \operatorname{int}(S)\big) \cong D^{3}$ and $D^{4}_{b} := D_{a}^{4} \cup P_{D_{i}}$. In the following, we will show that $p^{-1}(D^{4}_{b} ) \simeq p^{-1}(W)$. Recall that, the face relations of the chamber decomposition induce a regular CW complex on $W$. Consider $D^{3} \subset \partial D^{4}_{b}$ such that $D^{3} \subset \operatorname{int}(P_{C})$. We endow $D^{3}$ with the regular CW complex on $W$. We associate each $n$-cell of $W$ with an $n$-cell of $D^{3}$ for $n \leq 3$. We define the map $f : Y \times D^{3} \longrightarrow p^{-1}(W)$ as follows. Let $e^{n}_{W_{j}}$ be an open cell of $W$ associated with the open cell $e^{n}_{D_{j}^{3}}$ of $D^{3}$, and let $Y_{D_{W_{j}}}$ denote the birational model attached to $e^{n}_{W_{j}}$. Then, we define $f \vert_{e^{n}_{D_{j}^{3} } } := (\phi_{D_{W_{j} }}, \operatorname{id} ) :Y \times e^{n}_{D^{3}_{j} } \longrightarrow Y_{D_{W_{j} } } \times e^{n}_{W_{j} } $, where $\phi_{D_{W_{j} }}: Y \longrightarrow Y_{D_{W_{j} } }$ is the contraction map. We define the map $f$ on each open cell of $D^{3}$, accordingly. From the above topological considerations, it follows that the attached birational model to $\operatorname{int}(S)$ is $Y_{D_{i}}$. Note that, we have $Y_{D_{W_{j} }} \simeq \operatorname{Cyl}(Y \longrightarrow Y_{D_{W_{j}}}) \simeq \operatorname{Cyl}(Y \longrightarrow Y_{D_{i}} ) \cup_{\operatorname{id}} \operatorname{Cyl}(Y_{D_{i}} \longrightarrow Y_{D_{W_{j}} }) $, where $\operatorname{Cyl}$ denotes the mapping cylinder. On the other hand, we have $\operatorname{Cyl}(f) \simeq p^{-1}(D^{4}_{b})$. Consequently, it follows that $p^{-1}(W) \simeq \operatorname{Cyl}(f) \simeq p^{-1}(D^{4}_{b})$. Hence, we can define a homotopy equivalence $h: p^{-1}(P_{C} \cup P_{D_{i}}) \longrightarrow (Y \times D^{4}) \cup p^{-1}(\partial (P_{C} \cup P_{D_{i}}) ) $, that is $h \vert_{p^{-1}}(D^{4}_{b})  : p^{-1}(D^{4}_{b} ) \xrightarrow{\simeq} p^{-1}(W)$, and the identity elsewhere.
     \end{proof}
		
     Let $P_{C}$ and $P_{D}$ denote the central chamber and one of its neighboring chambers. By the above lemma, we have $p^{-1}(P_{C} \cup P_{D_{i}}) \simeq (Y \times D^{4}) \cup p^{-1}(\partial (P_{C} \cup P_{D_{i}}))$. As a result, we arrive at $\sfrac{\operatorname{Gr}(2, \mathbb{C}^{5})}{T} \simeq \big((\sfrac{\operatorname{Gr}(2, \mathbb{C}^{5})}{T})\setminus p^{-1}(\operatorname{int}(P_{C} \cup P_{D_{i} })) \big) \cup \big((Y \times D^{4}) \cup p^{-1}(\partial (P_{C} \cup P_{D_{i}}))\big)$, where the latter topological space is defined in Definition \ref{Replacing}. 

    \begin{theorem}\label{Theo1}
	The topological space $X$, defined in Equation \ref{HomModel}, is homotopy equivalent to the orbit space $\sfrac{\operatorname{Gr}(2,\mathbb{C}^{5})}{T}$. 
    \end{theorem}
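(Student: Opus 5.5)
The plan is to apply Lemma~\ref{CentCham} repeatedly, one chamber at a time, so that every chamber of the chamber decomposition of $P=\Delta_{2,5}$ is eventually replaced by a copy of the central chamber, in the sense of Definition~\ref{Replacing}. We order the chambers $P_{D_1},\dots,P_{D_N}$ so that each $P_{D_k}$ shares a wall with $P_C\cup P_{D_1}\cup\dots\cup P_{D_{k-1}}$. For instance, we can order them by the cardinality of $D_k$, using that every chamber is reached from $P_C$ through chambers whose sets of contracted curves grow. The induction hypothesis states that $\sfrac{\operatorname{Gr}(2,\mathbb{C}^{5})}{T}$ is homotopy equivalent to a space $Z_{k-1}$. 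In $Z_{k-1}$, the preimage of the enlarged central region $Q_{k-1}:=P_C\cup P_{D_1}\cup\dots\cup P_{D_{k-1}}$ has been replaced by $Y\times Q_{k-1}$ glued along $\partial Q_{k-1}$ via contraction maps, and over the rest of $P$ the space still looks like the original orbit space. For the induction step we rerun the argument of Lemma~\ref{CentCham} with $P_C$ replaced by $Q_{k-1}$ and $P_{D_i}$ replaced by $P_{D_k}$. This works because that proof only used three facts: the region over the central part is a product $Y\times D^4$, the wall between the two regions is a $3$-disc, and the commutative diagrams \eqref{CommDiag} of contraction maps give the mapping cylinder identity $\operatorname{Cyl}(Y\to Y_{D_{W_j}})\simeq \operatorname{Cyl}(Y\to Y_{D_k})\cup\operatorname{Cyl}(Y_{D_k}\to Y_{D_{W_j}})$. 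All three hold for $Q_{k-1}$ and $P_{D_k}$, because $Q_{k-1}$ is again a $4$-disc whose intersection with $P_{D_k}$ is a union of walls homeomorphic to $D^3$. Each step is a homotopy equivalence relative to the part outside $p^{-1}(Q_k)$, so the steps compose.

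After $N$ steps the whole interior of $P$ carries $Y$. We get $\sfrac{\operatorname{Gr}(2,\mathbb{C}^{5})}{T}\simeq Z_N=(Y\times P)\cup p^{-1}(\partial P)$, glued via the contraction maps $Y\to Y_{D_F}$ for the faces $F$ of the chamber decomposition that lie in $\partial P$. Next we identify what lies over $\partial P$. Over the interior of each octahedral facet $O_i$ the fibre of $p$ is $\mathbb{CP}^1$, and the gluing map from $Y$ is the conic bundle $f_i$; over every other boundary point the fibre is a point. The inner chambers adjacent to $O_i$ carry models $Y_{D}$ obtained by contracting curves in the fibres of $f_i$, and $f_i$ factors through them. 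So the gluing in $Z_N$ is, up to homotopy, exactly the relation $\sim$ from Equation~\ref{HomModel}: $\{x\}\times Y$ collapses to $\mathbb{CP}^1_i$ over $\operatorname{int}(O_i)$ and to a point elsewhere. This identifies $Z_N$ with $X$.

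I expect the main obstacle to be this last identification on the boundary. The chamber decomposition restricts to a subdivision of each octahedron and of the lower-dimensional faces of $P$, and the orbit space attaches possibly different models to these pieces. Over the octahedra it need not be globally $\mathbb{CP}^1\times O_i$ with a single map $f_i$. I would handle this by another mapping-cylinder collapse in a collar of $\partial P$. Using \eqref{CommDiag}, each gluing map $Y\to Y_{D_F}$ over a boundary face factors through $f_i$ (respectively through the constant map), so the collar $Y\times\partial P\times[0,1]$ glued to $p^{-1}(\partial P)$ deformation retracts, cell by cell on the regular CW structure of $\partial P$, onto the simpler gluing. One also has to check that the fibres over the triangles, squares and tetrahedra of $\partial P$ are contractible, so that collapsing them to points does not change the homotopy type. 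Since $p^{-1}$ of these faces are orbit spaces of lower-dimensional toric pieces (points, or spheres coned off), this should follow by the same cylinder argument.
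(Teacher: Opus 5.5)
Your proposal is correct and follows essentially the paper's route. You apply Lemma~\ref{CentCham} inductively to replace every chamber by a copy of the central chamber. You then identify the result with $X$ using that the gluing maps over $\partial P$ factor through the conic bundles $f_i$ (as in Diagram~\eqref{CommDiag2}) or through the constant map. Your ordering by $\operatorname{card}(D)$ with the growing region $Q_{k-1}$, and your collar argument at $\partial P$, make explicit what the paper does more loosely chamber type by chamber type.
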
 
    \begin{proof}
    By Lemma \ref{CentCham}, we can replace all chambers associated with a blow-up of $\mathbb{CP}^{2}$ at 3 or 2 points in general positions with a copy of the central chamber, inductively. Since there is no contraction map in either direction between $\mathbb{CP}^{2} \# \overline{\mathbb{CP}^{2}}$ and $\mathbb{CP}^{1} \times \mathbb{CP}^{1}$, their associated chambers are not adjacent to one another. This can also be easily confirmed from \cite[Figure 2]{bohm2020computing}. In the first step, we left the chambers associated with $ \mathbb{CP}^{2}$ and $\mathbb{CP}^{2} \# \overline{\mathbb{CP}^{2}}$ unchanged.

    Each chamber associated with $\mathbb{CP}^{1} \times \mathbb{CP}^{1}$ has an $O_{i}$ in its topological boundary. Using the same ideas as in the proof of Lemma \ref{CentCham} and the commutative diagram 
      \begin{align*}
    	\xymatrix{Y \ar[d] \ar[dr] &  \\
    		\mathbb{CP}^{1} \times \mathbb{CP}^{1} \ar[r] & \mathbb{CP}_{i}^{1}} 
    \end{align*} 
    implies that we can replace the chambers associated with $\mathbb{CP}^{1} \times \mathbb{CP}^{1}$ with a copy of the central chamber. Recall that the map $f_{i}:Y \longrightarrow \mathbb{CP}^{1}_{i}$ is a distinct conic bundle described in Section \ref{TopConst}. Let  $\mathbb{CP}_{i}^{2} \# \overline{\mathbb{CP}^{2}_{i}}$ and $\mathbb{CP}_{j}^{2} \# \overline{\mathbb{CP}^{2}_{j}}$ be the birational models associated with chambers $P_i$ and $P_{j}$ with octahedrons $O_{i}$ and $O_{j}$ in their topological boundaries, respectively. Let $\mathbb{CP}^{2}$ be the birational model attached to a chamber $P_{k}$ such that $\partial P_{K} \cap \partial P_{j} \neq \emptyset $ and $\partial P_{K} \cap \partial P_{i} \neq \emptyset $. Let $\mathbb{CP}^{1}_{i}$ and $\mathbb{CP}^{1}_{j}$ be the associated varieties with $O_{i}$ and $O_{j}$, respectively. Hence, we have the following commutative diagram
     \begin{align*}
    	\xymatrix{
    		Y \ar[d]_{\phi_{j}} \ar[r]^{\phi_{i} \phantom{---}} & \mathbb{CP}_{i}^{2} \# \overline{\mathbb{CP}^{2}_{i}} \ar[d] \\
    	\mathbb{CP}_{j}^{2} \# \overline{\mathbb{CP}^{2}_{j}} \ar[r] & \mathbb{CP}^{2}.
    	}
    \end{align*}
    We replace the chamber $P_{K}$ with a copy of the central chamber, and this copy is attached to the tetrahedron in the topological boundary of $P_{K}$ using the trivial map $Y \longrightarrow \ast$. Recall that the maps $f_{i}:Y \xrightarrow{\phi_{i}} \mathbb{CP}_{i}^{2} \# \overline{\mathbb{CP}^{2}_{i}} \longrightarrow \mathbb{CP}^{1}_{i}$ and $f_{j}:Y \xrightarrow{\phi_{i}} \mathbb{CP}_{j}^{2} \# \overline{\mathbb{CP}^{2}_{j}} \longrightarrow \mathbb{CP}^{1}_{j}$ are conic bundles. We replace the chambers associated to $\mathbb{CP}_{i}^{2} \# \overline{\mathbb{CP}^{2}_{i}}$ and $\mathbb{CP}_{j}^{2} \# \overline{\mathbb{CP}^{2}_{j}}$ with copies of the central chamber and it follows that these chambers are attached to $\mathbb{CP}^{1}_{i}$ and $\mathbb{CP}^{1}_{j}$ associated with octahedra $\mathcal{O}_{i}$ and $\mathcal{O}_{j}$using the maps $f_{i}$ and $f_{j}$, respectively. Note that the conic bundles $f_{i}$ and $f_{j}$ are different in the sense that the induced homomorphisms $(f_{i})_{\ast}$ and $(f_{j})_{\ast}$ correspond to different rows of the matrix $f_{\ast}$ defined in Equality \ref{bound1}.

    Note that an octahedron $\mathcal{O}_{i}$ lies in the topological boundaries of more than one chamber. Hence, we need to show that for a given octahedron $\mathcal{O}_{i}$, replacing all chambers with $\mathcal{O}_{i}$ in their topological boundary, with a copy of the central chamber does not change the homotopy type of the initial space. Let $\mathbb{CP}^{1}_{i}$ be attached to $O_{i}$ and $Y_D$ be the birational model associated with a chamber with $O_i$ in its topological boundary. It follows that $Y_{D} \in \{ \mathbb{CP}^2 \# \overline{\mathbb{CP}^{2}}, \mathbb{CP}^{1} \times \mathbb{CP}^{1} \}$. Note that, we have the following commutative diagram
    \begin{align}\label{CommDiag2}
    	\xymatrix{Y \ar[d] \ar[r] &\mathbb{CP}^2 \# \overline{\mathbb{CP}^{2}} \ar[d] \\
    		\mathbb{CP}^{1} \times \mathbb{CP}^{1} \ar[r] & \mathbb{CP}_{i}^{1},} 
    \end{align}
    where the composition map $f_{i}:Y \longrightarrow \mathbb{CP}^{1}_{i}$ defines a conic bundle structure on $Y$. The commutativity of Diagram \ref{CommDiag2} ensures that after replacing chambers associated with  $\mathbb{CP}^{2} \# \overline{\mathbb{CP}^{2}}$ and $\mathbb{CP}^{1} \times \mathbb{CP}^{1}$ the resulting space is well-defined and replacing the chambers with $\mathcal{O}_{i}$ in their topological boundaries with a copy of the central chamber does not change the homotopy type of the initial space.
    
    After replacing all chambers with a copy of the central chamber, denote the resulting space by $\widetilde{X}$. The topological space $\widetilde{X}$ has one chamber associated with the degree $5$ del Pezzo surface Y. The chamber is attached to each $\mathbb{CP}^{1}_{i}$ associated with the interior of the octahedron $\mathcal{O}_{i}$ using the conic bundle map $f_{i}$. Furthermore, the chamber is attached to each tetrahedron $T_{i}$ and the topological boundaries of octahedra using the trivial map $Y \longrightarrow \ast$. Note that Lemma \ref{CentCham} implies that after replacing all chambers with a copy of the central chamber the homotopy type of the orbit space $\sfrac{\operatorname{Gr}(2,\mathbb{C}^{5})}{T}$ remains unchanged. From the construction it follows that $X \cong \widetilde{X}$ and $\widetilde{X} \simeq \sfrac{\operatorname{Gr}(2,\mathbb{C}^{5})}{T}$. Hence, we arrive at $X \simeq \sfrac{\operatorname{Gr}(2,\mathbb{C}^{5})}{T}$.
    \end{proof}

    \section{Homotopy Type of the Orbit Space}

    In the previous section, we showed that the orbit space $\sfrac{\operatorname{Gr}(2,\mathbb{C}^{5})}{T}$ is homotopy equivalent to the topological space $X$, defined in Equation \ref{HomModel}. In this section, we aim to study the space $X$. Let $\Sigma A$ denote the unreduced suspension of the topological space $A$. We denote the suspension of a map $f:A \longrightarrow B$ by $\Sigma f: \Sigma A \longrightarrow \Sigma B$. Since the 4-skeleton of $X$ is homeomorphic to $D^{4}$, the topological space $X$ is simply connected and we have $X \simeq \sfrac{X}{X^{4}} \vee D^{4} \simeq \sfrac{X}{X^{4}}$.
    As a result, we can endow the space $\sfrac{X}{X^{4}}$ with the CW complex $\sfrac{X}{X^{4}}=e^{8} \bigcup_{i=1}^{5}e^{6}_{i} \bigcup_{i=1}^{5}e^{5}_{i} \cup e^{0}$.
    Since the spaces $X$ and hence $\sfrac{X}{X^{4}}$ are simply connected,the 6-skeleton of $\sfrac{X}{X^{4}}$ is homotopy equivalent to a Moore space with $H_{5}((\sfrac{X}{X^{4}})^{6}) \cong \mathbb{Z}_{2}$ and hence homotopy equivalent to $\Sigma^{4} \mathbb{RP}^{2}$. Thus, we have 
     $X \simeq	\sfrac{X}{X^{4}} \simeq D^{8} \bigcup_{f} \Sigma^{4} \mathbb{RP}^{2}$,
      where $[f] \in \pi_{7}(\Sigma^{4}\mathbb{RP}^{2})$.
      
      The following lemma is an elementary result about the n-th suspension of a topological space. 
      \begin{lemma}\label{susn}
    	Let $A$ be a topological space. Then $\Sigma^{n}A \cong \sfrac{(D^{n} \times A)}{\sim}$, where for $y, y^{\prime} \in D^{n}$, and $a,a^{\prime} \in A$, we set
    	\begin{align*}
    	(y , a) \sim 
    	\left\{ \begin{array}{rclc}
    	(y^{\prime}, \ast) & \text{if}
    & y,y^{\prime} \in \partial D^{n} &\text{, $y=y^{\prime}$, and for a fixed point $\ast \in A$}\\
	(y^{\prime}, a^{\prime}) & \text{if} & y,y^{\prime} \in \operatorname{int}(D^{n}) &\text{, $y=y^{\prime}$, and $a=a^{\prime}$.}\\
	\end{array}\right.
	\end{align*}
    \end{lemma}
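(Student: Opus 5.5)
The plan is to identify the quotient $\sfrac{(D^{n} \times A)}{\sim}$ with the join $S^{n-1} \ast A$, and then use $S^{n-1} \ast A \cong \Sigma^{n} A$ for the unreduced suspension. The relation $\sim$ collapses $\{y\} \times A$ to a single point for each $y \in \partial D^{n}$ and identifies nothing over $\operatorname{int}(D^{n})$. Throughout I assume $A$ is compact Hausdorff, which covers the intended application $A = \mathbb{RP}^{2}$. In that setting every quotient below is a compact Hausdorff space, so any continuous bijection between two of them is a homeomorphism. This is the device I will use to avoid comparing quotient topologies by hand.

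\emph{Step 1 (polar coordinates on $D^{n}$).} Write $D^{n}$ as the quotient of $S^{n-1} \times [0,1]$ by $(s,0) \sim (s',0)$, via $(s,t) \mapsto ts$. Then $D^{n} \times A$ is a quotient of $S^{n-1} \times [0,1] \times A$: we identify $(s,0,a) \sim (s',0,a)$, which collapses the $S^{n-1}$-coordinate at $t=0$. The relation $\sim$ of the lemma adds the identifications $(s,1,a) \sim (s,1,a')$, which collapse the $A$-coordinate at $t=1$. These two families of identifications are exactly the defining relations of the join $S^{n-1} \ast A$. The induced map $\sfrac{(D^{n}\times A)}{\sim} \longrightarrow S^{n-1} \ast A$ is therefore a continuous bijection between compact Hausdorff spaces, hence a homeomorphism.

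\emph{Step 2 (the case $n=1$).} Here $S^{0} \ast A$ is $[-1,1] \times A$ with $\{\pm 1\} \times A$ each collapsed to a point, which is the unreduced suspension $\Sigma A$. One can also read this off the lemma directly, since $D^{1} = [-1,1]$.

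\emph{Step 3 (induction).} I use the standard homeomorphisms $S^{n-1} \cong S^{0} \ast S^{n-2}$ and associativity of the join for compact Hausdorff spaces. These give
$$S^{n-1} \ast A \cong S^{0} \ast (S^{n-2} \ast A) \cong \Sigma (S^{n-2} \ast A) \cong \Sigma(\Sigma^{n-1} A) = \Sigma^{n} A,$$
where the third homeomorphism is the induction hypothesis and uses that $\Sigma$ preserves homeomorphisms. An alternative avoids associativity. Decompose $D^{n} \cong D^{1} \times D^{n-1}$ and write $\partial(D^{1}\times D^{n-1}) = (\partial D^{1} \times D^{n-1}) \cup (D^{1} \times \partial D^{n-1})$. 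Then perform the collapse in two stages: first over $D^{1} \times \partial D^{n-1}$ fibrewise in the $D^{1}$-direction, which yields $D^{1} \times \Sigma^{n-1}A$ by induction; then over $\partial D^{1}$, which yields $\Sigma(\Sigma^{n-1}A)$. In both stages, compact Hausdorff plus a continuous bijection gives the homeomorphism.

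The main obstacle is point-set rather than conceptual. One must check that the iterated quotients agree with the single quotient, and that join associativity holds in the quotient topology. Both are handled uniformly by the compact Hausdorff criterion. Concretely, I would write the explicit map $D^{n} \times A \to \Sigma^{n} A$, $(ts, a) \mapsto$ the class of $(s,t,a)$. I would then verify that it is continuous, surjective, and that its fibres are exactly the $\sim$-classes. Continuity is clear, and fibres over interior points are singletons.
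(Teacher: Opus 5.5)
Your main argument (Steps 1--2 and the join induction in Step 3) is correct, but it takes a different route from the paper.

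The paper treats only $n=2$ and works directly in suspension coordinates. It writes $\Sigma A=(A\times\mathring{I})\cup\{\ast\}\cup\{\ast\}$ and splits $\Sigma^{2}A$ into $A\times\mathring{I}\times\mathring{I}$ plus two arcs with common endpoints. It reads this off as $(A\times\mathring{D}^{2})\cup\partial D^{2}$ and says the general case is analogous; it never compares the topologies.

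You instead identify the quotient with the join $S^{n-1}\ast A$ via polar coordinates, then induct using $S^{n-1}\cong S^{0}\ast S^{n-2}$ and associativity of the join. What this buys is a uniform argument in $n$ in which the point-set issues are actually settled, via a continuous bijection from a compact space onto a Hausdorff one. What it costs is join associativity, and hence your compact Hausdorff hypothesis. That hypothesis is harmless for the paper. The lemma is invoked in the proof of Theorem~\ref{Theo2} with $A=Y$, to identify $X^{6}/{\sim}$ with $(\Sigma^{4}Y)^{6}$, and $Y$ is a closed manifold. Note that it is $Y$, not $\mathbb{RP}^{2}$, that is actually needed.

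The ``alternative that avoids associativity'' at the end of Step 3 is wrong as written. In the model $D^{n}\cong D^{1}\times D^{n-1}$, the relation $\sim$ over a boundary point $(u,y)$ with $u=\pm1$ collapses only the fibre $\{(u,y)\}\times A$. It does not collapse all of $\{u\}\times D^{n-1}\times A$. So your second stage collapses $\{\pm1\}\times\Sigma^{n-1}A$ along the projection $\pi\colon\Sigma^{n-1}A\to D^{n-1}$ onto a copy of $D^{n-1}$, not to a point. What you get is the double mapping cylinder of $D^{n-1}\xleftarrow{\pi}\Sigma^{n-1}A\xrightarrow{\pi}D^{n-1}$, not $\Sigma(\Sigma^{n-1}A)$ via the identification you describe.

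The repair is to model $D^{n}$ as $\Sigma D^{n-1}$: take $D^{1}\times D^{n-1}$ and collapse each of $\{1\}\times D^{n-1}$ and $\{-1\}\times D^{n-1}$ to a point. Its boundary is $\Sigma\,\partial D^{n-1}\cong S^{n-1}$, and your two stages then really do give $D^{1}\times\Sigma^{n-1}A$ and then $\Sigma(\Sigma^{n-1}A)$. This corrected version is exactly the coordinate picture behind the paper's $n=2$ computation (a square with two opposite edges collapsed to points). It even proves the lemma for arbitrary $A$ without associativity, because $\mathrm{id}_{D^{1}}\times q$ is a quotient map for any quotient map $q$, and $q\times\mathrm{id}_{A}$ is closed for the perfect map $q\colon D^{1}\times D^{n-1}\to\Sigma D^{n-1}$. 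Since your join argument already stands on its own, this flaw affects only the optional second route.
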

    \begin{proof}
	We show the statement for $n=2$. The proof for the general case goes along the same lines. Let $\Sigma A = (A\times \mathring{I}) \cup \{\ast\} \cup \{\ast\}$, then $\Sigma^{2} A = \sfrac{\big( (X \times \mathring{I}) \cup \{ \ast \} \cup \{ \ast \} \big) \times I}{\sim^{\prime}} \cong (A \times \mathring{I} \times \mathring{I}) \cup (\{\ast\} \times I) \cup (\{\ast\} \times I)$. Hence, we arrive at $\Sigma^{2}A \cong (A \times \mathring{D}^{2}) \cup (\partial D^{2})$.
    \end{proof}
    \begin{theorem}\label{Theo2}
	Let 	$X \simeq D^{8} \bigcup_{f} \Sigma^{4} \mathbb{RP}^{2}$. Then, the equivalence class $[f] \in \pi_{7}(\Sigma^{4} \mathbb{RP}^{2} )$ is a generator. Furthermore, we have $D^{8} \bigcup_{f} \Sigma^{4} \mathbb{RP}^{2} \not \simeq \Sigma^{4}(D^{4} \bigcup_{g} \mathbb{RP}^{2}) $ for all $[g] \in \pi_{3}(\mathbb{RP}^{2})$.
    \end{theorem}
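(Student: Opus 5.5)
Here $\pi_7(\Sigma^4\mathbb{RP}^2)$ lies in the stable range, since $\Sigma^4\mathbb{RP}^2$ is $4$-connected and $7<2\cdot 5-1$. So $\pi_7(\Sigma^4\mathbb{RP}^2)\cong \pi_3^s(\mathbb{RP}^2)=\pi^s_2(M(\mathbb{Z}_2,1))$. The plan is to compute this group from the cofibration $S^4\xrightarrow{2}S^4\to\Sigma^4\mathbb{RP}^2\to S^5\xrightarrow{2} S^5$. Using $\pi_7(S^4)\cong\mathbb{Z}\oplus\mathbb{Z}_{12}$ (stably $\pi_3^s=\mathbb{Z}_{24}$) and $\pi_7(S^5)\cong\mathbb{Z}_2$, the exact sequence
\[
\pi_7(S^4)\xrightarrow{2}\pi_7(S^4)\to\pi_7(\Sigma^4\mathbb{RP}^2)\to\pi_7(S^5)\xrightarrow{2}\pi_7(S^5)
\]
(stably: $\mathbb{Z}_{24}\xrightarrow{2}\mathbb{Z}_{24}\to \pi\to \mathbb{Z}_2\xrightarrow{0}\mathbb{Z}_2$) gives an extension of $\mathbb{Z}_2\langle\eta^2\rangle$ by $\mathbb{Z}_2\langle\nu\rangle$. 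This extension is known to be nonsplit, since $2\cdot \widetilde{\eta^2}=i\circ\nu$ up to odd multiples, the standard Toda bracket $\langle 2,\eta,2\rangle=\eta^2$ argument. Hence $\pi_7(\Sigma^4\mathbb{RP}^2)\cong\mathbb{Z}_4$. I will cite this computation (e.g.\ from Wu or Toda) rather than redo it.

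Next I need to detect $[f]$. The natural invariant is the Steenrod square $Sq^2:H^6(X;\mathbb{Z}_2)\to H^8(X;\mathbb{Z}_2)$, together with $Sq^2$ on $H^5$ (via the Bockstein) and $Sq^3:H^5\to H^8$. Since $H_5=\mathbb{Z}_2$, we have $H^5(X;\mathbb{Z}_2)=H^6(X;\mathbb{Z}_2)=\mathbb{Z}_2$ connected by $Sq^1$. In $\mathbb{Z}_4$, the image of $\nu$ is $2\cdot$generator and is detected only by secondary or $e$-invariant data. A generator maps onto $\eta^2\in\pi_7(S^5)$ after pinching, so it is detected by $Sq^2Sq^1 = Sq^3$ on $H^5$, i.e.\ by $Sq^2\neq 0$ on $H^6(X;\mathbb{Z}_2)$. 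So the key claim is that $Sq^2:H^6(X;\mathbb{Z}_2)\to H^8(X;\mathbb{Z}_2)$ is nonzero.

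To prove this I will use the model $X$ from Theorem~\ref{Theo1} and the cell structure \eqref{CWX}. The 8-cell and 6-cells come from $Y\times e^4_P$ relative to the boundary, so $X/X^{4}$ restricted to the $6$- and $8$-cells looks like $\Sigma^4$ of $Y$ with the $5$-cells attached by the $f_i$. The mod $2$ class in $H^6$ is the reduction of a class dual to a vector $v\in H^2(Y;\mathbb{Z}_2)$ in the kernel of $\partial_6^T \bmod 2$. By the Cartan/suspension compatibility, $Sq^2$ on this class equals $Sq^2 v=v\cup v\in H^4(Y;\mathbb{Z}_2)$, i.e.\ $Q_Y(v,v)\bmod 2$. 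With $Q_Y=\operatorname{diag}(-1,-1,-1,-1,1)$, I will compute the mod-2 kernel of the matrix in \eqref{bound1} and check that $v\cdot v$ is odd. Mod 2 the matrix has rank 4, and its kernel contains $v=(1,1,1,1,1)$, giving $v\cdot v=-4+1$, which is odd. This shows $Sq^2\neq0$, so $[f]$ maps to $\eta^2$ and is therefore a generator of $\mathbb{Z}_4$.

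The hardest step is justifying rigorously that $Sq^2$ on $X/X^4$ is computed by the cup square on $Y$, since $X$ is not literally a suspension. I will argue via the relative homeomorphism $(P\times Y)/(\partial P\times Y \cup P\times *)\to$ a quotient of $X$ and naturality of $Sq^2$. For the second statement, $\Sigma^4(D^4\cup_g\mathbb{RP}^2)$ has its top cell attached by $\Sigma^4 g$, where $g\in\pi_3(\mathbb{RP}^2)\cong\pi_3(S^2)=\mathbb{Z}$ factors through $S^2$. The suspension $\Sigma^4 g$ therefore factors through $\pi_7(S^6)=\mathbb{Z}_2$ composed with $\Sigma^4$ of the covering $S^2\to\mathbb{RP}^2$, which lands in the subgroup of order at most $2$. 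Here I need to check that the image of $\Sigma^4\eta$ in $\pi_7(\Sigma^4 \mathbb{RP}^2)$ is not a generator. It is not: the composite with the pinch is $\Sigma^4$ of the degree-zero composite $S^2\to\mathbb{RP}^2\to S^2$, which is trivial. Hence in the complex $\Sigma^4(D^4\cup_g\mathbb{RP}^2)$ we have $Sq^2=0$ on $H^6$, while $Sq^2\neq 0$ for $X$. Since Steenrod squares are homotopy invariants, the two spaces are not homotopy equivalent.
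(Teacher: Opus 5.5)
Your overall strategy is sound. For the generator claim it is the paper's argument in cohomological form. The paper composes the attaching map with $X^6\to X^6/\!\sim\;\cong(\Sigma^4Y)^6\to S^6_i$, recognizes $\Sigma^4(\pm h)\neq 0$, and concludes because every homomorphism $\mathbb{Z}_4\to\mathbb{Z}_2$ kills $2\mathbb{Z}_4$. You pull $Sq^2$ back along the same quotient $q\colon X\to X/\!\sim\;\cong\Sigma^4Y$, and the odd cup square in $Y$ is the same input as the paper's $\Sigma^4(\pm h)$. No relative-homeomorphism argument is needed: $q$ is cellular, is the identity on $6$-cochains and an isomorphism on $H^8$, so naturality of $Sq^2$ and $Sq^2\sigma^4=\sigma^4Sq^2$ suffice. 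For the non-equivalence your route is better than the paper's. The paper only shows $[\Sigma^4g]\in\{0,2\}$ and leaves implicit why different attaching classes rule out a homotopy equivalence. $Sq^2\colon H^6\to H^8$, by contrast, is a homotopy invariant. Its vanishing for $\Sigma^4(D^4\cup_g\mathbb{RP}^2)$ even follows directly from $Sq^2(a^2)=a^4=0$, since $a^3\in H^3(\mathbb{RP}^2\cup_gD^4;\mathbb{Z}_2)=0$.

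Your explanation of why $Sq^2$ detects a generator is wrong in several places and must be replaced. The cofibration is $S^5\xrightarrow{2}S^5\xrightarrow{i}\Sigma^4\mathbb{RP}^2\xrightarrow{j}S^6$, not $S^4\xrightarrow{2}S^4\to\Sigma^4\mathbb{RP}^2$. Your indexing describes the stable $\pi_3$ of the mod $2$ Moore spectrum, which is actually $\mathbb{Z}_2\oplus\mathbb{Z}_2$, and $\nu$ plays no role. Correctly indexed, $\pi_7(\Sigma^4\mathbb{RP}^2)\cong\mathbb{Z}_4$ is an extension of $\pi_7(S^6)=\mathbb{Z}_2\langle\eta\rangle$ by $\mathbb{Z}_2\langle i\circ\eta^2\rangle$. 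So twice a generator is $i\circ\eta^2$, and a generator maps to $\eta$ (not $\eta^2$) under the pinch $j$. Also $Sq^2Sq^1\neq Sq^3$: the Adem relation is $Sq^1Sq^2=Sq^3$, which vanishes on $H^5$ here.

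The implication you need is one line and mirrors the paper's $\operatorname{Hom}(\mathbb{Z}_4,\mathbb{Z}_2)$ step. If $[f]=2[f']$, then $j\circ f=2(j\circ f')=0$ in $\pi_7(S^6)\cong\mathbb{Z}_2$. The collapse $C_f\to C_f/S^5=S^6\cup_{j\circ f}e^8$ is an isomorphism on $H^6$ and $H^8$ mod $2$, so $Sq^2=0$ on $H^6(C_f;\mathbb{Z}_2)$.

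Your class in $H^6(X;\mathbb{Z}_2)$ is also selected by the wrong condition. Since $C_7(X)=0$, $H^6(X;\mathbb{Z}_2)=C^6/\operatorname{im}\delta^5$ with $\operatorname{im}\delta^5=\{x:\sum_jx_j\equiv0\}$. What matters is that the cochain has odd coordinate sum, not that it lies in $\ker(\partial_6^T\bmod 2)$. The vector $(1,1,1,1,1)$ qualifies, as does the paper's $E_i^{*}$. Mod $2$ the cup square is $x\cup x\equiv\sum_jx_j$, since the cup form in the dual basis is $Q_Y^{-1}=Q_Y$, and this is $1$ on the nonzero class. With these corrections the proof is complete.
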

    \begin{proof}
	Consider the double covering $c:S^{2} \longrightarrow \mathbb{RP}^{2} $. The induced homomorphism $c_{3}: \pi_{3}(S^{2}) \longrightarrow \pi_{3}(\mathbb{RP}^{2})$ is an isomorphism. Choose any attaching map $g^{\prime}:S^{3} \longrightarrow \mathbb{RP}^{2}$. Since $c_{3}$ is an isomorphism, we can find $g \simeq g^{\prime}$ such that $g: S^{3} \longrightarrow S^{2} \xrightarrow{\phantom{-}c \phantom{-}} \mathbb{RP}^{2}$. It follows that $\Sigma^{4}g \simeq \Sigma^{4}g^{\prime}$ . In \cite{wu2003homotopy}, Wu shows that $\pi_{7}(\Sigma^{4} \mathbb{RP}^{2}) \cong \mathbb{Z}_{4}$. Consider the homomorphism $(\Sigma^{4}c)_{7}: \pi_{7}(S^{6}) (\cong \mathbb{Z}_{2}) \longrightarrow \pi_{7}(\Sigma^{4} \mathbb{RP}^{2}) (\cong \mathbb{Z}_{4})$. Since $\operatorname{Hom}(\mathbb{Z}_{2},\mathbb{Z}_{4}) \cong \mathbb{Z}_{2}$ an element of the homotopy group $\pi_{7}(S^{6})$ under the induced homomorphism $(\Sigma^{4}c)_{7}$ is mapped either to $0$ or $2$ in $\mathbb{Z}_{4}$. It follows that the homotopy class $(\Sigma^{4}c)_{7}([S^{3} \longrightarrow S^{2}]) \in \pi_{7}(\Sigma^{4} \mathbb{RP}^{2})$ is either 0 or 2 and not a generator. It follows that the homotopy class of the fourth suspension of the attaching map $g'$, namely $[\Sigma^{4}g'] \in \pi_{7}(\Sigma^{4} \mathbb{RP}^{2})$, can not be a generator. It remains to show that the attaching map $f:S^{7} \longrightarrow \Sigma^{4}\mathbb{RP}^{2}$ is a generator.

    Let $X^{6}$ be the 6-skeleton of $X$. From the above considerations, it follows that $\pi_{7}(X^{6}) \cong \mathbb{Z}_{4}$. We define the topological space $\sfrac{X}{\sim}$ as follows. We set $(x, y) \sim (x^{\prime}, \ast)$ for all $x,x^{\prime} \in \operatorname{int}(O_{i})$, and every $y \in \mathbb{CP}^{1}$, and a fixed point $\ast \in \mathbb{CP}^{1}$, if $x=x^{\prime}$, and perform no non-trivial identification elsewhere. In other words, we collapse each $\mathbb{CP}^1$ attached to a point $x \in \operatorname{int}(\mathcal{O})_{i}$ to a point. As a result of Lemma \ref{susn}, we have $\sfrac{X^{6}}{\sim} \cong (\Sigma^{4} Y)^{6}$, where $Y$ is the blow-up of $\mathbb{CP}^{2}$ at four points in general position. The composition of the quotient maps $X^{6} \longrightarrow \sfrac{X^{6}}{\sim}$ and $\sfrac{X^6}{\sim} \big( \cong (\Sigma^{4} Y)^{6} \big) \longrightarrow \sfrac{(\Sigma^{4} Y)^{6}}{((\Sigma^{4} Y)^6 -e^{6}_{i} )} \big(\cong S^{6}_{i} \big)$, denoted by $p$, induces the homomorphism $p_{7}: \pi_{7}(X^{6})(\cong \mathbb{Z}_{4}) \longrightarrow  \pi_{7}(S^{6}_{i})(\cong \mathbb{Z}_{2})$. Note that $\operatorname{Hom}(\mathbb{Z}_{4},\mathbb{Z}_{2}) \cong \mathbb{Z}_{2}$. In the following, we show that the induced homomorphism $p_{7}$ corresponds to the non-trivial element of $\operatorname{Hom}(\mathbb{Z}_{4},\mathbb{Z}_{2})$. Consider the following composition of maps
    \begin{align*}
	\alpha:S^{7} \xrightarrow{f} X^{6} \rightarrow \sfrac{X^6}{\sim} \big( \cong (\Sigma^{4} Y)^{6} \big) \rightarrow \sfrac{(\Sigma^{4} Y)^{6}}{((\Sigma^{4} Y)^6 -e^{6}_{i} )} \big( \cong S^{6}_{i}\big),
    \end{align*}	
    where $f$ is the attaching map of $X$. Since the topological space $Y$ is homeomorphic to the blow-up of $\mathbb{CP}^2$ on four points in general position, the map $\alpha: S^{7} \longrightarrow S^{6}_{i}$ is homotopic to $\Sigma^{4}(h)$ or $\Sigma^{4}(-h)$, where $h:S^{3} \longrightarrow S^{2}$ is the Hopf fibration. It is well known that $\Sigma^{4}(h)$ and $\Sigma^{4}(-h)$ are not null-homotopic. It follows that the homomorphism $p_{7}$ is the non-trivial element of $\operatorname{Hom}(\mathbb{Z}_{4},\mathbb{Z}_{2})$, and hence $p_{7}([f])$ is the non-trivial element of $\pi_{7}(S^{6}_{i})\cong \mathbb{Z}_{2}$. Hence, the homotopy class $[f] \in \pi_{7}(X^{6})$ is a generator. 
    \end{proof}

\end{document}